\newtheorem{dfn}{Definition}%
\newtheorem{thm}{Theorem}
\newtheorem{lem}{Lemma}
\begin{document}

\title[Random Logistic Map]{Noisy fixed points: stability of the invariant distribution of the random logistic map}

\author*[1]{\fnm{Kimberly} \sur{Ayers}}\email{kayers@csusm.edu}

\author[2]{\fnm{Ami} \sur{Radunskaya}}\email{aradunskaya@pomona.edu}
\equalcont{These authors contributed equally to this work.}

\affil*[1]{\orgdiv{Department of Mathematics}, \orgname{California State University San Marcos}, \orgaddress{\street{333 S. Twin Oaks Valley Rd.}, \city{San Marcos}, \postcode{92096}, \state{CA}, \country{USA}}}

\affil[2]{\orgdiv{Department of Mathematics and Statistics}, \orgname{Pomona College}, \orgaddress{\street{610 College Ave.}, \city{Claremont}, \postcode{91711}, \state{CA}, \country{USA}}}

\abstract{
The full family of discrete logistic maps has been widely studied both as a canonical example of the period-doubling route to chaos, and as a model of natural processes. In this paper we present a study of the stochastic process described by iterations of the logistic map with a random parameter.  In addition to bringing together previously known results, we present a proof that the unique invariant measure of the process in the chaotic regime is asymptotically stable.  
}

\keywords{stochastic quadratic map; discrete chaos; Markov process; asymptotic stability} 


\maketitle

\section{Introduction}

The discrete logistic map: 
$$f_\lambda(x) = \lambda x (1 - x),$$
is well known as a model of self-limiting population growth, whose dynamic properties are described in the early review paper by Robert May \cite{May}.  In this highly influential paper (over 6,600 citations as of November, 2023), May states that, while the fine structure of the bifurcation diagrams in unimodal maps are fascinating, what is more important in practice is the statistical properties of these maps.  In his opinion, ``What is called for is an effectively stochastic description of the dynamics", and that these descriptions of evolving  continuous densities will still capture some salient features of the maps themselves.  
{
The bifurcation diagram of the logistic family shows how the long term dynamics of the map change as the parameter, $\lambda$, is varied.  What May is referring to in this challenge is a description of the distribution of $x$-values, the ``continuously evolving densities", as the parameter $\lambda$ varies stochastically. 
}
{In practice, we might want to model a scenario in which  the intrinsic growth rate of a population fluctuates over a specific range by letting the parameter $\lambda$ be a random variable with support in a small interval.  
}

{
In the literature on random dynamical systems, a stochastically-varying parameter is known as ``parametric noise" to distinguish it from ``additive noise".  This distinction is important from a modeling perspective: parametric noise models stochastic features of the environment, be it reproduction rates that vary due to temperature or resources, or economic growth rates, which might vary in response to world events, politics, or other economic pressures.  Additive noise, on the other hand, models uncertainty in the model predictions, so that the model becomes: $X_{n+1} = f(X_n) + \epsilon_n$, where $\epsilon_n$ is the additive noise which is typically independent of the previous observation, $X_n$, and of the law of evolution, $f(\cdot)$.    In this paper we consider parametric noise, modeling uncertainty in the model parameters.  Mathematically, parametric noise gives rise to some interesting questions, such as the uniqueness and stability of invariant measures.  We note that, in some cases, additive noise which is state-dependent (i.e. $\epsilon_n$ depends on $X_n$) can be reinterpreted as parametric noise, so the distinction between the two is not always clear. See, for example,
}
\cite{Klebaner1998} 
{ 
which studies quasi-stationary distributions of the logistic map with state-dependent additive noise. For a theoretical discussion of additive noise in the context of random dynamical systems, we refer the reader to 
}
\cite{Lasota}.  
{
The logistic map with additive noise has also been studied in physical and computational contexts.  For example, in 
}
\cite{Baptista1996} 
{
this type of random logistic map 
 been studied as a model of physical processes with random ``kicks".  Other examples (a far from complete list!)  include 
 }
 \cite{Sato2018}, 
 {
 which discusses how bifurcations of the logistic map are affected by additive noise,
 }
 \cite{Sano2020}, 
 {
 which describes how synchronization time can be shortened with additive noise, and
 }
 \cite{Hamzi2023}
 {
 which studies the random logistic map with additive noise in the context of algorithmic probability.
 }

Since May's remarks appeared, many models 
{
involving parametric noise
}have appeared in the ecology, biology and economics literature.  In \cite{Haskell2005}, the authors address the Neubert conjecture by looking at a stochastically-perturbed Beverton-Holt model of population growth.  A continuous version of the stochastic logistic model is studied in \cite{Descheemaeker2020}, where they show that these systems can be used to describe experimental results on microbial growth. Three different economic models described by randomly perturbed deterministic maps are studied in \cite{Satoh2001},  \cite{nishimura2004} and \cite{Bhattacharya2004}.  These examples are by no means exhaustive, but they do give a sense of the ubiquity and impact of these models.  In most cases, the goal is to first show that the map has a unique invariant measure, since the uniqueness of this measure is essential if one is to say something about the long-term behavior of the system.  

Along these same lines, Kolmogorov suggested starting with a deterministic dynamical system, and adding a stochastic perturbation in such a way that the resulting process has a unique invariant measure.  This stochastic version of the original deterministic system is sometimes called the {\it Kolmogorov measure} of the dynamical system \cite{Kifer1986}.

In this paper we present a study of the stochastic process described by iterations of the logistic map with a randomly-varying parameter,  
{
which we will refer to as ``the stochastic logistic map".
Our contribution to the literature on the stochastic logistic map  is that, in
}
addition to bringing together previously known results, we present a proof that the unique invariant measure of the process is asymptotically stable.  

\section{Background and Definitions}

{
In this section we present some technical background, notation, and definitions for the reader who is new to the concepts of stochastic dynamical systems, transition probabilities and invariant measures.
}

Given a state space $S$, one can consider either a deterministic or a stochastic (or random) discrete dynamical system on $S$.  A deterministic dynamical system is given by a function $f:S\rightarrow S$, where, given an initial state $x_0\in S$, one can compute future states recursively:
$$x_{n+1}=f(x_n).$$
Because, given $x_n$, the value of $x_{n+1}$ can be computed exactly, this is a {\it deterministic} process, and the set of iterates, $x_0, x_1, x_2, \dots = \{x_n\}_{n=0}^\infty$, is called the {\it trajectory} of the initial value $x_0$ under the dynamical system given by the function $f$.  A natural question is: given an initial value, $x_0$, what is the long-term behavior of the sequence $x_0, x_1, x_2, \dots$?  If the function $f$ is continuous, and the sequence converges to a finite value, $x^*$, then it must be a fixed point, i.e. $x^*=f(x^*)$.  So one of the first questions a dynamicist asks when encountering a dynamical system is: where are the fixed points?   
Other questions of interest include determining the stability of fixed points - whether nearby trajectories converge toward,  or away from,  fixed points - and looking at the limiting behavior of non-fixed points. 

In a stochastic dynamical system, we consider a sequence of random variables: $\{X_n \}_{n = 0}^\infty$.  Given a value for the random variable $X_n$ in the state space, the value of $X_{n+1}$ cannot be determined exactly because the law of evolution is subject to some noise.  We are then interested in the sequence of distributions of these random variables.  In particular, we are interested in the existence of an \emph{invariant distribution}: a distribution of $X_n$ such that $X_{n+1}$ has the same distribution.  If such a distribution exists, we are also interested in its stability; that is, whether other distributions converge to this invariant distribution in some manner. We define the stochastic logistic map and the notions of invariant measures and their stability more precisely below.

{
In this paper, we consider the state space $S=[0,1]$ and the particular choice of parameter space 
$\Lambda=(3.87,4) \subset [0,4]$.  With these choices, we are ensured that any point in $S$ is mapped to a point in $S$, and that all of the logistic maps are in the ``chaotic regime".  This will be important in the proofs in Section
}
\ref{sec:proof}.

We consider the logistic Markov process 
\begin{equation} \label{eq:RandomLogistic}
  X_{n+1} = \lambda_{n+1}X_n(1-X_n),  
\end{equation}
where   $\{\lambda_i\}_{i\geq 1}$ are independent, $\Lambda$-valued random variables, independent of $X_n$ with identical distribution $Q$. The initial state $X_0$ is an $S$-valued random variable, independent of the $\lambda$'s. Let $A \subset S$. Then $\{X_n\}$ is an $S$-valued Markov process with a continuous state space, $S$, and  transition probabilities   
\begin{equation*}
 \hbox{Prob}\{X_{1} \in A | X_0 = x\} =  p(x,A)= Q\left(\frac{A}{x(1-x)}\right).
\end{equation*}
In words: the probability that $X_1$ is in a given set $A$, given $X_0 = x$, is the probability of the set of $\lambda$-values that map $x$ to a point in $A$.  The measure $p$ is called a (one-step) {\it transition probability} on the state space, $S$, and we can write $p(x,A) = \int_S {\bf 1}_A \, p(x,dy)$.

Similarly, we write the $n$th order transition probability as
$$ \hbox{Prob}\{X_n \in A | X_0 = x\} = p^n(x, A).$$

Suppose we have a distribution, $\mu$, on the state space, $S$, which gives the distribution of $X_n$.  When we apply the process \eqref{eq:RandomLogistic}
to subsets of $S$, we induce a new distribution, $P \mu$ on the state space as follows.  If $A \subset S$, then we define 
\begin{equation} \label{eq:Pdef} P\mu (A) = \int_S p(x,A) \mu(dx) .
\end{equation}
So the operator $P$ maps probability measures on $S$ to probability measures on $S$.  We can now define an {\it invariant measure} under the operator, $P$, which corresponds to a fixed point in the deterministic case.

\begin{dfn}
If $P \mu (A) = \mu(A)$ for all $A \in {\cal B}(S)$ then $\mu$ is an invariant measure under the operator $P$.
\end{dfn}

What 
{
we would
}
like is the {\it asymptotic stability} of the (unique) invariant measure, $\pi$.  In other words, for any probability measure $\mu$ on $S$, when is it the case that 
$$ \sup_{A \in {\cal B}(S)}
|P^n \mu (A) - \pi(A)| \rightarrow 0 \quad \hbox{ as } n \rightarrow \infty ?$$
To answer this question, we need the notions of {\it irreducibility} and {\it aperiodicity}.

The following definitions of irreducibility
and recurrence are 
 from \cite{Bhattacharya2007}, Def. 9.2
\begin{dfn}
Let $\mathcal{B}(S)$ denote the Borel subsets of $S$. 
\begin{enumerate}
\item[(i)]
A Markov process $\{X_n\}_{n=0}^\infty$ is {\bf irreducible} with respect to a non-zero $\sigma$-finite reference measure $\nu$ (or $\nu$-irreducible for short) if, for every $x\in S$ and every $B\in\mathcal{B}(S)$ with $\nu(B)>0$, there is $n\geq 1$ such that 
{
$p^n(x,B)>0$. 
}
\item[(ii)]
The Markov process is {\bf $\nu$-recurrent} if, for each $x \in S$ and $A \in \mathcal{B}(S)$ such that $\nu(A) > 0$, we have 
$$P(\eta_A < \infty | X_0 = x) = 1, \quad \hbox{ where } \;
\eta_A= \inf\{ n \ge 1 : X_n \in A \} $$
\item [(iii)] If the Markov process is both $\nu$-irreducible and $\nu$-recurrent, then it is {\bf Harris ($\nu$-) recurrent}.
\item [(iv)]  If a Harris recurrent process satisfies 
$$\sup\{E(\eta_{A_0}|X_0=x):x\in A_0\}<\infty$$
for some $\nu$-recurrent set $A_0$, then the process is said to be {\bf positive Harris ($\nu$)  recurrent}.
\end{enumerate}
\end{dfn}
In other words: a Markov process is $\nu$-irreducible if there is a positive probability that every element in the state space eventually gets mapped to any set of positive $\nu$ measure.  It is $\nu$-recurrent if, with probability 1, it visits every set of positive $\nu$-measure in a finite number of steps.  

\begin{dfn}
The set $A_0$ is $\nu$-recurrent if $\nu(A_0) > 0$ and it is recurrent, i.e. $P(\eta_{A_0} < \infty | X_0 = x) = 1$ $\forall x \in {\mathcal B}(S)$. 
\end{dfn}


Irreducibility can then be used to show that the invariant distribution is unique using the following theorem:

\begin{thm} \label{thm:theorem2BlueBook} [\cite{Bhattacharya2007}, Theorem C9.3 in Chapter 2]
If a Markov process is $\nu$-irreducible {with respect to some $\sigma$-finite measure $\nu$ on $S$} and has an invariant probability $\mu^*$, then the process is positive Harris ($\mu^*$-)  recurrent, $\mu^*$ is the unique invariant probability and, for any probability measure $\mu$ on S:
$$\lim_{n\rightarrow\infty}\sup_{A\in\mathcal{B}(S)}\left |\frac{1}{n}\sum_{m=1}^n P^m \mu(A)-\mu^*(A)\right | =0$$
\end{thm}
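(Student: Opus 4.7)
The plan is to establish the three parts — positive Harris recurrence, uniqueness of $\mu^*$, and Cesaro convergence in total variation — in that order, treating the third as the main technical obstacle. A preliminary step is to upgrade the reference measure: since $\mu^*$ is a finite invariant measure and the chain is $\nu$-irreducible, a maximal-irreducibility-measure argument lets me replace $\nu$ by $\mu^*$ without loss of generality, so I may assume the chain is $\mu^*$-irreducible.

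For Harris recurrence I would start from Poincar\'e recurrence applied to the canonical shift on path space under the stationary measure generated by $\mu^*$: for every $A$ with $\mu^*(A) > 0$, the chain revisits $A$ infinitely often from $\mu^*$-a.e.\ starting point. To pass from $\mu^*$-a.e.\ $x$ to every $x\in S$, I would use $\mu^*$-irreducibility and a strong-Markov $0$-$1$ argument: from any $x$ the chain enters in finite time, with positive probability, a point from which $A$ is revisited almost surely, and iterating this forces the hitting probability to $1$ from every starting state. Positive recurrence — and hence the finiteness of $\sup_{x\in A_0} E_x[\eta_{A_0}]$ on an appropriate set $A_0$ — then follows from Kac's identity $\int_{A_0} E_x[\eta_{A_0}]\,\mu^*(dx) = 1$ combined with a uniform bound on a small set. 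Uniqueness of the invariant probability is essentially automatic at this point: any second invariant probability must share its null sets with $\mu^*$ by irreducibility, and Harris recurrence precludes the coexistence of distinct ergodic components, so a Chacon--Ornstein averaging argument forces the two invariant measures to agree on every bounded measurable test function.

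The Cesaro convergence in total variation is the heart of the proof. My approach would be a minorization/splitting construction: $\nu$-irreducibility together with positive recurrence yields a small set $C$, an integer $m$, and a probability measure $\nu_0$ with $p^m(x,\cdot) \geq \varepsilon\,\nu_0(\cdot)$ uniformly for $x\in C$. Nummelin's splitting technique then lifts the chain to an extended chain possessing a regenerative atom, so trajectories decompose into i.i.d.\ excursions between regeneration epochs. The renewal theorem applied to these excursions, combined with positive recurrence providing finite mean excursion length, yields Cesaro convergence of $\frac{1}{n}\sum_{m=1}^n P^m\mu$ to $\mu^*$ in total variation for any initial distribution $\mu$; the supremum over $A\in\mathcal{B}(S)$ in the statement is exactly this total-variation norm. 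The main obstacle is the existence of the small set: in the abstract setting this is delivered by the structure theorem for irreducible chains, but in concrete applications such as the stochastic logistic map it must be verified by hand through a direct analysis of the transition kernel, which is presumably why the authors allocate a dedicated proof section to it.
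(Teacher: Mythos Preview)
The paper does not prove this theorem at all: it is quoted verbatim as a background result from \cite{Bhattacharya2007} (Theorem~C9.3 in Chapter~2), and the authors simply invoke it as a black box. So there is no proof in the paper against which to compare your proposal.

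Your sketch is a reasonable outline of the standard argument found in the Markov-chain literature (maximal irreducibility, Harris recurrence via a $0$--$1$ law, Kac's formula, Nummelin splitting and renewal theory for the Ces\`aro limit), though several steps would need substantial fleshing out---in particular the passage from $\mu^*$-a.e.\ recurrence to recurrence from every $x$, and the abstract existence of a small set, are each genuine theorems rather than routine observations.

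One clarification: your final paragraph suggests that Section~\ref{sec:proof} of the paper is devoted to constructing a small set in order to prove this abstract theorem. That is not what the section does. Section~\ref{sec:proof} takes Theorems~\ref{thm:theorem2BlueBook} and~\ref{thm:thrm3BlueBook} as given, and verifies their \emph{hypotheses} (irreducibility and strong aperiodicity) for the specific stochastic logistic map; the minorization in Lemma~\ref{thm:asymptotic_stability} is used to establish aperiodicity for that particular process, not to supply the small-set ingredient for a general proof of the cited theorem.
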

Thus, if a Markov process is irreducible and has {\it some} invariant measure, we know that any measure converges to a unique invariant measure, 
{
$\mu^*$,
}
under the operator, $P$ ``in the sense of C{\'e}saro", i.e. the average of the sequence of measures converges to $\mu^*$.   Note that a result from \cite{Meyn} (Proposition 10.1.2) states that the invariant measure $\mu^*$ dominates the irreducible measure $\nu$ (that is, every set of positive $\nu$ measure also has positive $\mu^*$ measure), so Theorem \ref{thm:theorem2BlueBook} also implies positive Harris recurrence  with respect to $\nu$. 

To get asymptotic stability of the invariant measure without this average, we need one more condition: aperiodicity.

\begin{dfn} \label{def:aperiodic}  A Markov process is said to be strongly aperiodic with respect to a $\sigma$-finite probability measure $\nu$ on $S$ if there is a set $A_0\subset S$ with $\nu(A_0)=1$ and an $c >0$ such that  one has
\begin{enumerate}
\item for all $x \in S$, $P(\eta_{A_0} < \infty | X_0 = x) = 1$, i.e.  $A_0$ is recurrent; 
\item for all $x\in A_0$, $A$ a measurable subset of $S$,
$p(x,A)\geq c \nu(A)$.
\end{enumerate}
\end{dfn}


\noindent
Let $\{X_n\}$ be a Markov process on a state space $S$, and ${\mathcal B}$ the Borel subsets of $S$.
\begin{thm} [\cite{Nummelin}, Corollary 6.7]
\label{thm:thrm3BlueBook}
Suppose $\{X_n\}$ is a positive Harris recurrent and strongly aperiodic Markov process with respect to some nonzero $\sigma$-finite measure $\nu$, with stationary distribution $\pi$.  Then for any initial distribution, $\mu$, of $X_0$:
$$\lim_{n\rightarrow\infty} \delta(P^n\mu,\pi) =0$$
where $\delta$ is the total variation distance 
$$\delta(P,Q)=\sup_{B\in\mathcal{B}}|P(B)-Q(B)|.$$  
\end{thm}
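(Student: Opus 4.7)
The theorem is a classical result (stated as Nummelin's Corollary 6.7), so my plan is to sketch the standard coupling argument which makes strong aperiodicity and positive Harris recurrence precisely the right hypotheses. The key observation is that condition (ii) of Definition \ref{def:aperiodic} is a \emph{one-step minorization} on the set $A_0$: for $x\in A_0$ we have
\begin{equation*}
  p(x,\cdot) \;=\; c\,\nu(\cdot) \;+\; (1-c)\,R(x,\cdot),
\end{equation*}
where $R(x,\cdot)$ is a sub-stochastic residual kernel. This decomposition says that each time the chain is in $A_0$, we can simulate the next step by flipping a coin of bias $c$; on heads, we draw the next state from the common measure $\nu$ independently of $x$; on tails, we draw from $R(x,\cdot)$. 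This is the Nummelin splitting construction.

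With the splitting in hand, I would build a coupling $(X_n,Y_n)$ of two versions of the process, the first started from the given initial distribution $\mu$ and the second from the stationary distribution $\pi$. The chains are run marginally correctly, but we introduce joint randomness so that whenever both components lie in $A_0$ \emph{at the same time}, we use a single coin flip: with probability $c$ both chains jump to the \textbf{same} point drawn from $\nu$, after which we let the two chains evolve identically. Let $T$ denote the first time this common jump occurs. Because the two chains agree from time $T$ onward, a standard coupling inequality yields
\begin{equation*}
  \delta(P^n\mu,\pi) \;\le\; \mathbb{P}(T>n),
\end{equation*}
so it suffices to prove that $T<\infty$ almost surely.

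The final step is to show $\mathbb{P}(T<\infty)=1$. At each time $n$ such that $X_n\in A_0$ and $Y_n\in A_0$, the coupling succeeds with probability exactly $c$, independently of the past, so it is enough to prove that the product chain $(X_n,Y_n)$ visits $A_0\times A_0$ infinitely often almost surely. This is where positive Harris recurrence and strong aperiodicity combine: positive recurrence of each marginal guarantees infinitely many returns of each chain to $A_0$, while the one-step minorization (as opposed to an $m$-step minorization, which would force a period) prevents the returns from falling on disjoint residue classes modulo some integer $>1$. Concretely, one can first argue that on each visit of $X$ to $A_0$ there is a uniformly positive chance that a coordinated draw from $\nu$ places $Y$ in $A_0$ as well at the next step, so joint visits to $A_0\times A_0$ occur infinitely often; a Borel--Cantelli argument on the independent coupling attempts then forces $T<\infty$ a.s.

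The main obstacle I expect is precisely this last step: proving joint recurrence of the coupled pair. Merely knowing that each marginal visits $A_0$ infinitely often is not enough; one must rule out the possibility that the two chains are always ``out of phase". Overcoming this is exactly the role of \emph{strong} aperiodicity (one-step rather than $m$-step minorization) together with Harris recurrence, and it is the technical heart of Nummelin's proof in \cite{Nummelin}. Once joint recurrence of the split chain is established, the remaining steps are routine probabilistic bookkeeping.
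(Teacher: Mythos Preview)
The paper does not prove this theorem at all: it is quoted verbatim as Corollary~6.7 of \cite{Nummelin} and used as a black box in the final stability argument. There is therefore no ``paper's own proof'' to compare your attempt against.

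That said, your sketch is a faithful outline of the standard Nummelin splitting/coupling proof that underlies the cited corollary, and you correctly identify the one genuinely nontrivial step, namely showing that the two coupled copies visit $A_0\times A_0$ (equivalently, the artificial atom in the split chain) simultaneously infinitely often. Your heuristic for that step (``on each visit of $X$ to $A_0$ there is a uniformly positive chance that a coordinated draw from $\nu$ places $Y$ in $A_0$ at the next step'') is not quite how the rigorous argument runs; the usual route is to pass to the split chain, where $A_0$ is replaced by a genuine atom $\alpha$, and then to show that the return-time distribution to $\alpha$ is aperiodic in the ordinary integer-lattice sense, after which Blackwell's renewal theorem (or a discrete coupling of two delayed renewal processes) gives the joint regeneration. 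But you flag this gap yourself, so as a proof \emph{plan} the proposal is sound.
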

\noindent


As an example of the ideas presented in this section, let us consider a version of the stochastic logistic map as examined in \cite{Rao1993}.  In this paper, Bhattacharya and Rao consider the stochastic logistic map
$$X_{n+1}=\lambda_{n+1} X_n(1-X_n)$$ 
where $\lambda$ is distributed according to a Bernoulli distribution, and takes two possible values, $\alpha$ and $\beta$ (where $\alpha<\beta$).  Thus, the process switches randomly between two quadratic maps.  Bhattacharya and Rao demonstrate that if $0\leq \alpha < \beta \leq 1$, then $\delta_0$, the Dirac measure supported on 0, is the unique invariant probability on [0,1].  If $1<\alpha<\beta \leq 2$ then there is a unique invariant probability on (0,1), and this probability is nonatomic.  If $2<\alpha<\beta<1+\sqrt{5}$ and $\alpha\in[8/\beta(4-\beta),\beta)$, then there exists a unique invariant probability on (0,1) which is nonatomic and has its support contained in $[1/2,(1+\sqrt{5})/4]$. \\




\section{Proof of the asymptotic stability of the invariant measure in the chaotic regime.} \label{sec:proof}


In section 2, we outlined results that give conditions for the existence of an invariant distribution of the stochastic logistic map, and for the stability of this invariant distribution. In this section we use these results to prove that, when parameters are supported in the chaotic regime,  the random logistic map has a unique asymptotically stable invariant measure.  

Recall that the deterministic logistic family: $f(x,\lambda) =f_{\lambda}(x) = \lambda x (1 - x)$ undergoes a period-doubling cascade as $\lambda$ increases from 1 to $\lambda_{2^\infty} \approx$ 3.56995.   Note that here iteration occurs only with respect to $x$; that is $f^{(m)}(x,\lambda)=f(f^{m-1}(x,\lambda),\lambda)$, where $\lambda$ remains fixed.  For some higher values of $\lambda$, the family exhibits stable periodic orbits of odd period, following the Sarkowski sequence, culminating with a stable period-3 orbit when $\lambda = 1 + \sqrt{8}$.  Interspersed between the values of $\lambda$ with stable periodic orbits are a set of large measure of parameter values for which there are no stable periodic orbits, and almost all initial conditions give rise to orbits with no finite period.  Thus, $\lambda_{2^\infty}$ marks the ``onset of chaos".  As $\lambda$ increases beyond $1 + \sqrt{8}$, the family of maps again experiences a period-doubling cascade, with stable orbits of period 6, 12, 24, 48, \dots, culminating at $\lambda_{3^\infty} \approx 3.8495$.  

When $\lambda$ is greater than this value, the maps exhibit chaotic behavior on a Cantor set of parameter values, with three ``chaotic bands" of orbits merging into one chaotic band at $\lambda \approx 3.857$.  
In what follows, we are interested in randomly perturbing the values of $\lambda$ in this chaotic regime.  To make sure we are well within the parameter range for which there is generically a single band chaotic attractor, we restrict  $\lambda \in (3.87, 4)$.  
The logistic family is well-studied in the literature, and more details on its dynamics can be found in, for example, \cite{Holmgren1996}.

We consider the stochastic logistic map, i.e. the Markov process given by 

\begin{equation}\label{SLMap}
X_{n+1}=\lambda_{n+1}X_n(1-X_n)
\end{equation}
where $\{\lambda\}_{i\geq 1}$ are i.i.d. random variables distributed uniformly on (3.87,4). The initial state $X_0$ is a random variable taking values in (0,1), with distribution independent of the $\lambda$'s. In (\cite{Athreya_Dai} Theorem 2), Athreya and Dai demonstrate that the Markov process $\{X_n\}$ given by Equation \eqref{SLMap} has an invariant distribution. \\
\indent We structure this section as follows: 
we first show the irreducibility of the Markov process in the chaotic regime using Lemmas \ref{irred} and \ref{thm:asymptotic_stability}, which together with the existence of the invariant measure gives positive Harris recurrence, by Theorem \ref{thm:theorem2BlueBook}.  This, along with strong aperiodicity, gives the asymptotic stability of the invariant measure by Theorem \ref{thm:thrm3BlueBook}.

The following lemma uses the existence of a dense orbit and continuity of the iterated functions to show that every $x\in(0,1)$ has a positive probability of entering any open set in finite time.  This lemma will be used to prove the irreducibility of $\{X_n\}$. 
\begin{lem}\label{irred}
Let $A$ be a nonempty open subset of $(0,1).$ For all $x\in (0,1)$ there is an $M$ such that $p^M(x,A) = Prob(X_M \in A | X_0 = x)  > 0 $.
\end{lem}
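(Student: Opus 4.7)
The plan is to reduce this probability statement to a deterministic reachability question. Writing $F_M(\lambda_1,\ldots,\lambda_M):=f_{\lambda_M}\circ\cdots\circ f_{\lambda_1}(x)$, continuity of $F_M$ implies that $F_M^{-1}(A)$ is open in $(3.87,4)^M$; since the $\lambda_i$ are independent and uniform on $(3.87,4)$, $p^M(x,A)>0$ is equivalent to $F_M^{-1}(A)\neq\emptyset$. So it suffices to exhibit one admissible sequence of parameters steering $x$ into $A$.

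To produce such a sequence, I would fix a single parameter $\lambda^*\in(3.87,4)$, chosen close enough to $4$, for which the forward-invariant attracting interval $J^*=[f_{\lambda^*}^2(1/2),f_{\lambda^*}(1/2)]$ has nonempty intersection with both $A$ and with the one-step reachable interval $R_1(x):=\{\lambda x(1-x):\lambda\in(3.87,4)\}$, and such that $f_{\lambda^*}\vert_{J^*}$ is topologically transitive (these $\lambda^*$ form a set of positive measure accumulating at $4$ in the one-band chaotic regime). By Baire category, the set of points in $J^*$ with dense $f_{\lambda^*}$-orbit is residual, so there exists $y^*\in R_1(x)\cap\mathrm{int}(J^*)$ whose forward orbit under $f_{\lambda^*}$ is dense in $J^*$, and consequently some $n\ge 0$ with $f_{\lambda^*}^n(y^*)\in A$.

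Joint continuity of $(y,\lambda_1,\ldots,\lambda_n)\mapsto f_{\lambda_n}\circ\cdots\circ f_{\lambda_1}(y)$ together with openness of $A$ then gives an open neighborhood $U\ni y^*$ inside $R_1(x)$ and an open neighborhood $V\ni(\lambda^*,\ldots,\lambda^*)$ inside $(3.87,4)^n$ such that the composition sends $U\times V$ into $A$. Since $X_1=\lambda_1 x(1-x)$ is uniform on $R_1(x)$, $\mathrm{Prob}(X_1\in U)>0$; $\mathrm{Prob}((\lambda_2,\ldots,\lambda_{n+1})\in V)>0$ because $V$ is open; and the two events are independent because the first depends only on $\lambda_1$. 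Taking $M=n+1$ then yields $p^M(x,A)\ge \mathrm{Prob}(X_1\in U)\,\mathrm{Prob}((\lambda_2,\ldots,\lambda_{n+1})\in V)>0$.

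The main obstacle I expect is securing a suitable $\lambda^*$, that is, one simultaneously making $J^*$ meet both $R_1(x)$ and $A$ and for which $f_{\lambda^*}\vert_{J^*}$ is topologically transitive. This relies on the standard picture of the single-band chaotic regime cited in the paper (\cite{Holmgren1996}): as $\lambda\uparrow 4$ the attractor $J_\lambda$ fills out $(0,1)$, so $J^*$ can be arranged to contain any prescribed open set bounded away from $\{0,1\}$, and transitive parameters are abundant. A minor technicality is that $R_1(x)$ need not sit entirely inside $J^*$ (for instance $\sup R_1(x)$ may approach $1$ when $x\to 1/2$, while $\sup J^*=\lambda^*/4<1$), but only a nonempty relatively-open intersection is required.
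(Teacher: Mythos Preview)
Your proposal is correct and follows essentially the same route as the paper: pick a topologically transitive parameter, reach its attracting interval from $x$ in one random step, iterate deterministically with that parameter until you land in $A$, and then use continuity of the composed polynomial map to thicken the single parameter sequence into an open (hence positive-probability) set. The paper fixes one such $\alpha$ once and for all and asserts density of the orbit in $(0,\alpha/4)$, whereas you (more carefully) work with the actual attractor $J^*=[f_{\lambda^*}^2(1/2),f_{\lambda^*}(1/2)]$ and allow $\lambda^*$ to depend on $x$ and $A$ so that $J^*$ meets both $R_1(x)$ and $A$; apart from this bookkeeping the arguments are identical.
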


\begin{proof}
It is known \cite{shi2007} that there is a value $\alpha\in (3.87,4)$ such that $f(x,\alpha)=\alpha x(1-x)$ exhibits topological transitivity; that is $f(x,\alpha)$ contains an orbit that is dense in $0,\alpha/4)$.
Call this dense orbit $\Gamma.$ Let $x\in (0,1)$ Note then that, for all $x\in (0,1)$, the set $\{\lambda x(1-x):3.87< \lambda <4\}$ is an open interval intersecting $(0,\alpha/4)$, and therefore must contain a point on $\Gamma.$  Therefore, there exists $\beta\in(3.87,4)$ such that $y=f(x,\beta)$ is on $\Gamma$. 

Note then, that since the orbit of $y$ is dense for parameter $\alpha$, there exists $M$ such that $f^M(y,\alpha)\in A$.  Then, $f^M(f(x,\beta),\alpha)\in A$.  Note further that $f(\ldots ,f(x,\lambda_1),\cdots,\lambda_{M+1})$ is a polynomial in $M+2$ variables and is thus continuous everywhere.  Therefore, since $f^M(f(x,\beta),\alpha)\in A$, and $A$ is an open set, there is a set of positive normalized Lebesgue measure $U$ in $(3.87,4)^{M+1}$ containing $(\beta,\underbrace{\alpha,\; \cdots \;,\alpha}_{\hbox{$M$ times}})$ such that for all $(\lambda_1,\cdots,\lambda_{M+1}) \in U$, $f(\ldots ,f(x,\lambda_1),\cdots,\lambda_{M+1})\in A$. This means that
$$Prob(X_{M+1}\in A|X_0=x)>0.$$

\end{proof}

The following lemma is used later to show strong aperiodicity as well as irreducibility.
\begin{lem} \label{thm:asymptotic_stability}
Let $S=(0,1)$.
\begin{equation*}
  X_{n+1}=\lambda_{n+1}X_n(1-X_n),\,\,\,n\geq0
\end{equation*}
with $\{\lambda_n\}_{n\geq 1}$ being i.i.d., $\lambda_i\sim \mathcal{U}(3.87,4)$, and $X_0$ an independent $S$-valued r.v. Let $\lambda_{min}=3.87$, and let $A_0= (1-\frac{1}{\lambda_{min}},1-\frac{1}{4})$ (that is, $A_0$ is the set of fixed points for the deterministic logistic maps corresponding to the possible parameter $\lambda$ values). Let $\phi$ be normalized Lebesgue measure on $A_0$.  Then there exists $c >0$ such that for all $x\in A_0$, and $A$ a measurable subset of $(0,1)$, we have $p(x,A)\geq c \phi(A).$

\end{lem}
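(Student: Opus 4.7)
The plan is to compute $p(x,A)$ as an explicit pushforward of $Q$ under the map $\lambda \mapsto \lambda x(1-x)$, and then to exploit the fact that the endpoints of $A_0$ are designed to be the nontrivial fixed points of the two extreme logistic maps $f_{\lambda_{\min}}$ and $f_4$.

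First I would rewrite
\begin{equation*}
p(x,A) \;=\; Q\bigl(\{\lambda \in (\lambda_{\min},4) : \lambda x(1-x) \in A\}\bigr).
\end{equation*}
Since $Q$ is uniform on $(\lambda_{\min},4)$ with density $1/(4-\lambda_{\min})$ and $\lambda \mapsto \lambda x(1-x)$ is affine with positive slope $x(1-x)$, the pushforward of $Q$ under this map is uniform on the interval $I_x := \bigl(\lambda_{\min}\, x(1-x),\; 4\, x(1-x)\bigr)$ with density $\tfrac{1}{(4-\lambda_{\min})\, x(1-x)}$. Consequently,
\begin{equation*}
p(x,A) \;=\; \frac{\operatorname{Leb}(A \cap I_x)}{(4-\lambda_{\min})\, x(1-x)}.
\end{equation*}

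The key geometric step will be to show $I_x \supseteq A_0$ for every $x \in A_0$. Write $x_L := 1 - 1/\lambda_{\min}$ and $x_R := 3/4$ for the endpoints of $A_0$; these are by construction the nontrivial fixed points of $f_{\lambda_{\min}}$ and $f_4$, so $\lambda_{\min}\, x_L(1-x_L) = x_L$ and $4\, x_R(1-x_R) = x_R$. Since $A_0 \subset (1/2,1)$, the parabola $h(y) := y(1-y)$ is strictly decreasing on $A_0$, so for any $x \in A_0$,
\begin{equation*}
\lambda_{\min}\, h(x) \;\leq\; \lambda_{\min}\, h(x_L) \;=\; x_L
\quad\text{and}\quad
4\, h(x) \;\geq\; 4\, h(x_R) \;=\; x_R,
\end{equation*}
which yields $I_x \supseteq (x_L, x_R) = A_0$.

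Combining the two facts and using $x(1-x) \leq 1/4$ together with $\operatorname{Leb}(A \cap A_0) = \operatorname{Leb}(A_0)\,\phi(A)$, I would conclude
\begin{equation*}
p(x,A) \;\geq\; \frac{\operatorname{Leb}(A \cap A_0)}{(4-\lambda_{\min})\, x(1-x)} \;\geq\; \frac{4\,\operatorname{Leb}(A_0)}{4-\lambda_{\min}}\,\phi(A),
\end{equation*}
so the lemma holds with $c := 4\,\operatorname{Leb}(A_0)/(4-\lambda_{\min})$. I do not foresee any real obstacle here beyond spotting the fixed-point identities at the endpoints of $A_0$; once those are observed, the rest reduces to the monotonicity of $h$ on $(1/2,1)$ and the elementary bound $x(1-x)\leq 1/4$.
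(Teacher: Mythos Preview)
Your proof is correct and follows essentially the same route as the paper: the containment $A_0\subset I_x$ that you derive from the fixed-point identities and the monotonicity of $h$ on $(1/2,1)$ is exactly the dual formulation of the paper's claim that $\tfrac{A}{x(1-x)}\subset(\lambda_{\min},4)$ for $A\subset A_0$ and $x\in A_0$, and both arguments finish with the same change of variables and the bound $x(1-x)\le 1/4$ to obtain the identical constant $c=4\operatorname{Leb}(A_0)/(4-\lambda_{\min})=1/\lambda_{\min}$. Your write-up is a bit more explicit about \emph{why} the endpoints of $A_0$ are the right extremizers, but the underlying computation is the same.
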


\begin{proof}
Let $\lambda_{min}=3.87$, and let $A_0= (1-\frac{1}{\lambda_{min}},1-\frac{1}{4})$ (that is, $A_0$ is the set of fixed points for the deterministic logistic maps corresponding to the possible parameter $\lambda$ values).  Let $L_1$ be the length of $A_0$ (that is, $L_1=\frac{1}{\lambda_{min}} - \frac{1}{4}$ ) and let $L_2$ be the length of interval $(\lambda_{min},4)$ (that is, $L_2 = 4 - \lambda_{\min}$).  Let $\phi$ be normalized Lebesgue measure on $A_0$. Then $A_0$, by definition, has measure 1, and for any measurable set $A$
:$$
\phi(A) = \frac{1}{L_1}\int_{y\in (A \cap A_0)}dy$$
Fix $x\in A_0$ and $A \subset S$.  We want to show that $p(x,A) \ge c \, \phi(A)$ for some positive constant, $c$ (Definition \ref{def:aperiodic}).   
Since $\phi(A)=\phi(A\cap A_0)$ it suffices to demonstrate the above result for sets $A\subset A_0$.

Notice for any $A\subset A_0$ and  with $\phi(A)>0$, and $x\in A_0$,
$$\underset{x}{\sup}\left\{\frac{A}{x(1-x)}\right\}<\frac{1-\frac{1}{4}}{\frac{1}{4}\left(1-\frac{1}{4}\right)}=4$$ 
and 

$$
\underset{x}{\inf}\left\{
\frac{A}{x(1-x)}
\right\}
>
\frac{1-\frac{1}{\lambda_{\min}}}
{\frac{1}{\lambda_{\min}}\left(1-\frac{1}{\lambda_{\min}}\right)}
=\lambda_{\min}=3.87
$$

\noindent Therefore, for any $A\subset A_0$ and $x\in A_0$, $\frac{A}{x(1-x)}\subset (3.87,4).$

\noindent Given $x\in A_0$ and $A\subset A_0$, we therefore have:
\begin{eqnarray*}
p(x,A) &=& Prob\left(\lambda\in \frac{A}{x(1-x)}\right)\\
&=& \displaystyle \frac{1}{L_2}\int_{\frac{A}{x(1-x)}}d\lambda\\
&=& \frac{1}{L_2}\int_A\frac{1}{x(1-x)}dy\\
&=&\frac{1}{L_2}\frac{1}{x(1-x)}\int_A dy\\
&=&\frac{1}{L_2}\cdot\frac{1}{x(1-x)}\cdot L_1\phi(A)\\
&\geq& 4\cdot \frac{L_1}{L_2}\phi(A) \\
& = & \frac{1}{\lambda_{\min}} \phi(A) \, ,
\end{eqnarray*}
since $x(1-x)\leq \frac{1}{4}$ for all $x\in[0,1]$.  Thus, letting $c = \displaystyle{\frac{1}{\lambda_{\min}}}$ gives us the desired result.  
\end{proof}

\begin{thm}
Let $S=(0,1)$, and set $A_0$ be defined as above. As before, define the Markov process on $S$:
\begin{equation*}
  X_{n+1}=\lambda_{n+1}X_n(1-X_n),\,\,\,n\geq0
\end{equation*}
with $\{\lambda_n\}_{n\geq 1}$ being i.i.d., $\lambda_i\sim \mathcal{U}(3.87,4)$, and $X_0$ an independent $S$-valued r.v. Then $\{X_n\}_{n=0}^\infty$ is irreducible with respect to normalized Lebesgue measure on $A_0$.
    
\end{thm}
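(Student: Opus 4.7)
The plan is to obtain irreducibility as an almost immediate consequence of the two preceding lemmas, via a one-step Chapman--Kolmogorov decomposition. Fix $x \in S = (0,1)$ and a Borel set $B \subset S$ with $\phi(B) > 0$, where $\phi$ is normalized Lebesgue measure on $A_0$. The goal is to produce some $n \ge 1$ with $p^n(x,B) > 0$.

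First, I would observe that the set $A_0 = (1 - 1/\lambda_{\min}, 3/4)$ is a nonempty open subinterval of $(0,1)$. Applying Lemma \ref{irred} to the open set $A_0$ and the starting point $x$ yields an integer $M \ge 1$ such that $p^M(x, A_0) > 0$, i.e.\ there is positive probability of landing in $A_0$ after $M$ steps. Next, I would invoke Lemma \ref{thm:asymptotic_stability}: for every $y \in A_0$ and every measurable subset of $S$, one has $p(y, B) \ge c\, \phi(B)$ for the constant $c = 1/\lambda_{\min}$. Since $\phi(B) > 0$ by hypothesis, this gives a uniform positive lower bound on $p(y,B)$ over all $y \in A_0$.

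Combining these two facts via the Markov property, I would write
\begin{equation*}
p^{M+1}(x, B) \;=\; \int_{S} p(y,B)\, p^M(x, dy) \;\ge\; \int_{A_0} p(y,B)\, p^M(x, dy) \;\ge\; c\,\phi(B)\cdot p^M(x, A_0) \;>\; 0,
\end{equation*}
so $n = M+1$ does the job and the process is $\phi$-irreducible. This also means the invariant measure $\mu^*$ from Athreya--Dai is unique, and Theorem \ref{thm:theorem2BlueBook} upgrades the conclusion to positive Harris recurrence with respect to $\phi$.

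There is no serious obstacle here: the lemmas have been designed precisely so that Lemma \ref{irred} handles the ``transport into $A_0$'' step while Lemma \ref{thm:asymptotic_stability} handles the ``spreading across arbitrary $B$'' step. The only points that deserve a moment of care are (i) confirming that $A_0$ is an open subset of $(0,1)$ so that Lemma \ref{irred} applies, and (ii) verifying that Lemma \ref{thm:asymptotic_stability} is genuinely stated for arbitrary measurable $B \subset S$ and not only for $B \subset A_0$, so that one does not need to restrict $B$ before applying the one-step bound.
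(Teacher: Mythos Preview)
Your proposal is correct and follows essentially the same route as the paper: use Lemma~\ref{irred} to reach $A_0$ with positive probability in some number of steps, then use the uniform minorization of Lemma~\ref{thm:asymptotic_stability} to hit $B$ in one more step. If anything, your Chapman--Kolmogorov integral inequality is a slightly cleaner formulation than the paper's conditional-probability product, which is really an inequality rather than the equality written there.
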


\begin{proof}
Let $x\in (0,1)$, and let $B$ be a subset of positive measure. We'd like to show that there is some $n\geq 1$ such that 
$p^n(x,B)=P(X_n\in B|X_0=x)>0.$  Note that 
$$P(X_n\in B|X_0=x)=P(X_n\in B |X_{n-1}\in A_0)*P(X_{n-1}\in A_0|X_0=x).$$
By Lemma \ref{thm:asymptotic_stability}, $P(X_n\in B |X_{n-1}\in A_0) \geq c\,\phi(B)>0$, where $c=\frac{1}{\lambda_{min}}$. $P(X_{n-1}\in A_0|X_0=x)>0$ by Lemma \ref{irred}, since $A_0$ is an open interval.  Thus, $p^n(x,B)=P(X_n\in B|X_0=x)>0$ and $\{X_n\}_{n=0}^\infty$ is irreducible with respect to normalized Lebesgue measure on $A_0$,
\end{proof}

By the existence of the invariant measure by \cite{Athreya_Dai} and Theorem \ref{thm:theorem2BlueBook}, the process is positive Harris recurrent with respect to $\phi$, and the process $\{X_i\}$ has a unique invariant measure that is stable in the sense of C{\'e}saro. We conclude this section by demonstrating that the process is strongly aperiodic, and thus the invariant distribution is asymptotically stable with respect to the total variation norm.

\begin{thm}
Let $S=(0,1)$.
\begin{equation*}
  X_{n+1}=\lambda_{n+1}X_n(1-X_n),\,\,\,n\geq0
\end{equation*}
with $\{\lambda_n\}_{n\geq 1}$ being i.i.d., $\lambda_i\sim \mathcal{U}(3.87,4)$, and $X_0$ an independent $S$-valued r.v. Then $\{X_n\}_{n=0}^\infty$ has an invariant distribution that is asymptotically stable with respect to the total variation norm. 
\end{thm}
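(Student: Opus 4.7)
The plan is to invoke Theorem \ref{thm:thrm3BlueBook} (Nummelin's Corollary 6.7), which delivers total-variation asymptotic stability as soon as we have both positive Harris recurrence and strong aperiodicity with respect to a common reference measure. The natural choice is $\phi$, normalized Lebesgue measure on $A_0 = (1 - 1/\lambda_{\min}, 3/4)$, the set introduced in Lemma \ref{thm:asymptotic_stability}. Positive Harris $\phi$-recurrence has effectively been assembled already in this section: Athreya--Dai supply an invariant probability, the preceding theorem supplies $\phi$-irreducibility, and Theorem \ref{thm:theorem2BlueBook} packages these two facts into positive Harris recurrence together with uniqueness of the invariant probability $\pi$. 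So only strong aperiodicity remains to check.

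To verify strong aperiodicity in the sense of Definition \ref{def:aperiodic}, I would take the same $A_0$ as above; since $\phi(A_0) = 1$, the required set in the definition is immediately at hand. Condition (1), recurrence of $A_0$, is a direct consequence of the Harris $\phi$-recurrence just established: because $\phi(A_0) > 0$, the hitting time $\eta_{A_0}$ is almost surely finite from every $x \in S$. Condition (2), the minorization $p(x,A) \geq c\,\phi(A)$ for every $x \in A_0$ and every measurable $A \subset S$, is exactly the conclusion of Lemma \ref{thm:asymptotic_stability} with $c = 1/\lambda_{\min}$. Both clauses of Definition \ref{def:aperiodic} are therefore satisfied.

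Having verified both hypotheses of Theorem \ref{thm:thrm3BlueBook}, I would then conclude that for every initial distribution $\mu$ of $X_0$,
\[
\lim_{n\to\infty} \delta(P^n \mu, \pi) = 0,
\]
which is precisely the claimed asymptotic stability of the unique invariant distribution $\pi$ in the total variation norm.

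Honestly, there is no real obstacle left at this stage: the substantive work has already been done, namely producing a dense orbit to get $\phi$-irreducibility (Lemma \ref{irred}) and bounding the transition kernel from below uniformly on $A_0$ (Lemma \ref{thm:asymptotic_stability}). The final theorem is essentially a bookkeeping step that glues these ingredients to the external results of Bhattacharya--Waymire and Nummelin. The only point I would double-check carefully is that the reference measure used for Harris recurrence is identical to the one used in the minorization condition for strong aperiodicity; both are normalized Lebesgue measure on $A_0$, so this consistency is automatic.
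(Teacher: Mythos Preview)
Your proposal is correct and follows essentially the same approach as the paper's proof: both establish strong aperiodicity with respect to $\phi$ by using positive Harris recurrence to verify the recurrence of $A_0$ and Lemma~\ref{thm:asymptotic_stability} for the minorization, then invoke Theorem~\ref{thm:thrm3BlueBook}. The paper's argument is just a terser version of exactly what you wrote.
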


\begin{proof}
Because $\{X_n\}_{n=0}^\infty$ is positive Harris recurrent with respect to $\phi$, it is $\phi$-recurrent.  This together with Lemma \ref{thm:asymptotic_stability} gives that the process is strongly aperiodic with respect to $\phi$.  By Theorem \ref{thm:thrm3BlueBook}, this implies that $\{X_n\}$ has an invariant distribution that is asymptotically stable with respect to the total variation norm.

\end{proof}

\section{Simulations and Discussion}




All code that was used to generate the figures in this section can be found at the GitHub repository here: \href{https://github.com/kdayers/NoisyFixedPoints}{https://github.com/kdayers/NoisyFixedPoints}.
Suppose $X$ is a random variable taking values in $[0,1]$.  If, for all $x$, 
$ Pr\{X \le x\} = Pr \{ f(X) \le x \} $ then we say that the distribution of $X$ is {\it invariant under $f$}.  In this way, we can discuss invariant distributions for the deterministic logistic map by looking at distributions invariant under $f_{\lambda}(x) = \lambda x ( 1 - x) $.   In \cite{ulam1947}, Ulam and von Neumann proved that the continuous invariant distribution for the deterministic logistic map with $\lambda = 4$ is a beta distribution with $\alpha = \beta = 0.5$.  The density for this distribution is symmetric on the interval (0,1), and goes to infinity at the endpoints of its support.  To be specific, the invariant distribution when $\lambda = 4$ is given by the density function 
\begin{equation} \label{eq:betadist}
f^*(x)= \begin{cases} \frac{1}{\pi\sqrt{x(1-x)}} & 0 < x < 1 \\ 0 & \hbox{ else} 
\end{cases}
\end{equation}
See Figure \ref{betadist} for a comparison of this beta density and an empirical distribution.  

\begin{figure}
\begin{center}
    \includegraphics[height =2in]{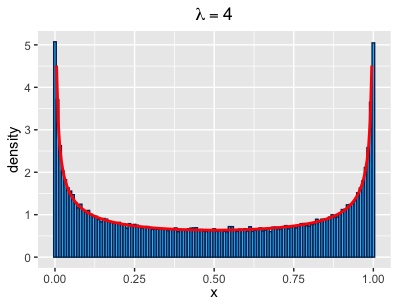}
\caption{Histogram of the beta distribution with $\alpha=\beta=0.5$. The solid line shows the theoretical distribution given in Equation \eqref{eq:betadist}; the bars are a histogram generated by sampling 100,000 initial values uniformly on $(0,1)$ and then iterating 20 times.}
\label{betadist}
\end{center}
\end{figure}

 However, invariant densities for other values of $\lambda$ are not, in general, symmetric.  In \cite{hall95}, Hall and Wolff provided numerical estimates for the invariant distributions for the deterministic logistic map for various values of $\lambda$.  They demonstrated that any continuous invariant density will have poles (singularities) at the boundary of its support,  as well as additional poles at the interior of the support in the case $\lambda < 4$.  They also show that these invariant distributions were not symmetric.  Figure \ref{fig:determinist_density} shows numerical approximations to theses deterministic invariant densities for $\lambda = 3.87$ and $\lambda = 3.95$  These numerical approximations are histograms of 100,000 sampled initial values iterated 20 times.  They indicate the singularities in the theoretical densities described in Remark 5 of \cite{hall95}. These singularities, or poles, correspond to the orbit of the critical point, $f_{\lambda}(1/2)$.  In addition, Hall and Wolff observed that the invariant density will be supported on the interval $(\frac{1}{4}\lambda^2(1-\frac{\lambda}{4}),\frac{1}{4}\lambda)$. This is due to the fact that any point in the interval $(\frac{1}{4}\lambda,1)$ will be mapped into the interval $(0,\frac{1}{4}\lambda^2(1-\frac{\lambda}{4}))$, and any point in $(0,\frac{1}{4}\lambda^2(1-\frac{\lambda}{4})$ will be mapped to a point in $[\frac{1}{4}\lambda^2(1-\frac{\lambda}{4}),\frac{1}{4}\lambda]$.  Therefore, after finitely many iterations, any point $x\in(0,1)$ must be mapped into  $[\frac{1}{4}\lambda^2(1-\frac{\lambda}{4}),\frac{1}{4}\lambda]$, and so the invariant density must be supported on this subinterval.  Note that as $\lambda\rightarrow 4$, $\frac{1}{4}\lambda^2(1-\frac{\lambda}{4})\rightarrow 0$ and $\frac{1}{4}\lambda\rightarrow 1$, so the interval of support widens as $\lambda$ approaches 4. 
 
 \begin{figure}
\begin{center}
 \includegraphics[height =1.8in]{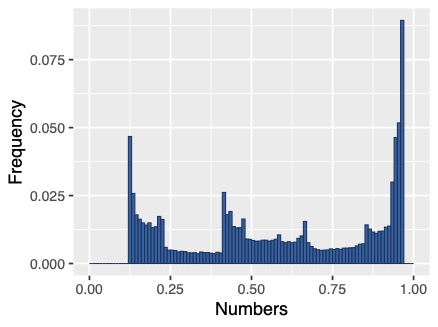}
  \includegraphics[height =1.8in]{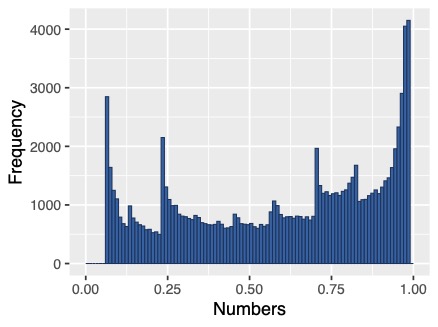}
\caption{Histogram of the simulated invariant distribution for the deterministic logistic map with $\lambda =3.87$ (left) and $\lambda = 3.935$ (right) for 100,000 sampled values, after $n=20$ iterates.}
\label{fig:determinist_density}
\end{center}
\end{figure}

\indent   To numerically explore convergence to the invariant distribution in the stochastic case, we ran several simulations with 100,000 initial values randomly sampled according to a variety of distributions:
    \begin{enumerate}
        \item A discrete distribution supported on the set $\{0.11, 0.33, 0.55, 0.6, 0.78\}$ with probabilities $\{0.196, 0.140, 0.233, 0.322, 0.107\}$ respectively (Figure \ref{fgr:dists}, left).
        \item A discrete distribution supported on $\{0.25,0.5, 0.75\}$  each with probability 1/3.
        \item  A truncated exponential distribution on (0,1) with parameter $r=1.2$ (Figure \ref{fgr:dists}, right).
        \item  A truncated gamma distribution on (0,1) with shape parameter 3 and scale parameter 1.
        \item A truncated normal distribution on (0,1) with mean 0.5 and standard deviation 0.3.
        \item A truncated $t$-distribution on (0,1) with 1 degree of freedom. 
        \item A uniform distribution on (0,1) (Figure \ref{fgr:dists}, center).
    \end{enumerate}
In each case, after several iterations, we obtained a histogram very similar to that presented in Figure \ref{enddist}: a bimodal, asymmetric distribution that appears to be supported throughout the unit interval, without singularities (poles) in the interior.  This is noteworthy, particularly because we simulated using both continuous and discrete initial distributions. We examined distributions that were left-skewed, right-skewed, and symmetric.  Each simulated initial distribution appears to converge to the same distribution, aligning with the global asymptotic stability of the invariant distribution established theoretically earlier in this paper.  We also note that, while the distribution demonstrates peaks toward the sides of the interval, it does not exhibit the poles at the boundary of the interval in the same manner as the invariant distributions for the deterministic maps.  However, this seems reasonable, as the interval of support of the invariant distribution is dependent on $\lambda$, as discussed above, and relatively fewer values of $\lambda$ will map points into ranges closer to 0 and 1, so it makes sense that the distribution will decrease near the boundary of $[0,1]$.  {It is also of note that even as the width of the interval on which $\lambda$ is supported narrows, the invariant distribution maintains this property of decreasing near the boundary of support.  For instance, we ran simulations where $\lambda$ was sampled uniformly on the interval $(3.87,9)$ and again on the interval $(3.87,3.935).$  Both invariant distributions exhibited this decrease toward the edge of the boundary of support (which, as expected, is not the entirety of $(0,1)$. Please see Figure} \ref{nar}. \\
\indent As for the speed of convergence, it appears that the convergence happens relatively quickly.  Figure \ref{evol} {shows the evolution of the simulated invariant density after various numbers of iterations, starting from a uniform distribution.} In this set of simulations, the empirical distributions after 5, 20 and 200 iterations are quite close to each other, and the distribution after 20 iterations is almost indistinguishable from the distribution after 200 iterations.  This justifies our use of only 20 iterates in the other simulations presented here.

\begin{figure}[h]
    \begin{center}
    \hbox{
      \includegraphics[height=1.25in]{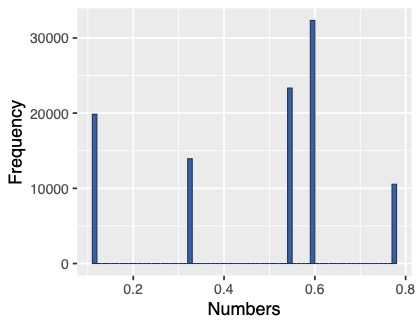} 
      \includegraphics[height = 1.25in]{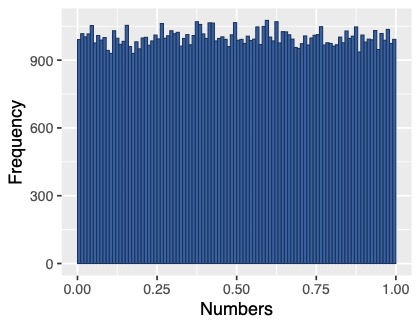} 
      \includegraphics[height = 1.25in]{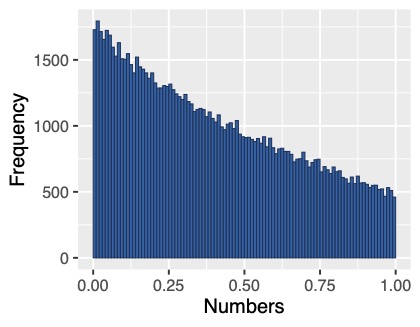}
      }
     \end{center}
    \caption{Starting distributions of 100,000 starting values sampled on the set $\{0.11, 0.33, 0.55, 0.6, 0.78\}$ with probabilities $\{0.196, 0.140, 0.233, 0.322, 0.107\}$ respectively (left), the uniform distribution on (0,1) (middle), and the truncated exponential distribution with parameter $r=1.25$ (right).}
  \label{fgr:dists}
 \end{figure}

 \begin{figure}[ht!]
 \begin{center}
     \includegraphics[width = .6\textwidth]{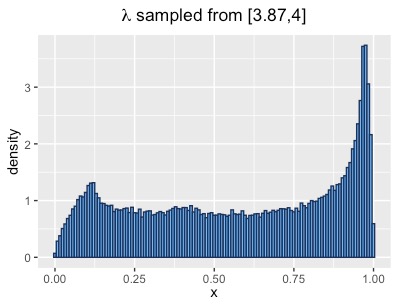}
     \caption{Empirical distribution of $\{X_n\}$ after $n=20$ iterations, where 100,000 initial values were sampled from a truncated exponential distribution with parameter $r=1.25$ as in Figure \ref{fgr:dists}.  Similar results were obtained using other initial distributions (results not shown).}
     \label{enddist}    
 \end{center}
 \end{figure}

 \begin{figure}[ht!]
 \begin{center}

    \includegraphics[height=1.8in]{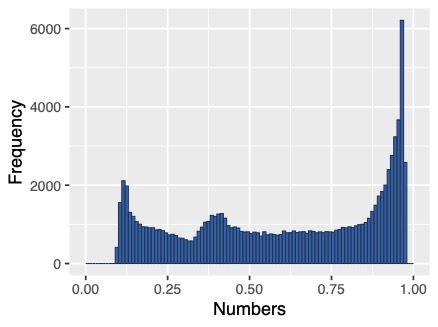}
    \includegraphics[height=1.8in]{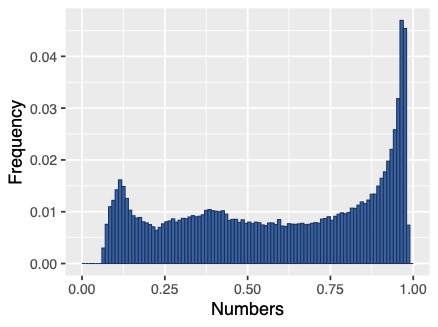}

\caption{The simulated invariant distribution of $\{X_n\}$ for $\lambda\sim U(3.87,3.9)$ (left) and $\lambda\sim U(3.87,3.935)$ (right), shown after 20 iterations.}
    \label{nar}
\end{center}
\end{figure}

\begin{figure}[ht!]
\begin{center}
    \includegraphics[scale=0.8]{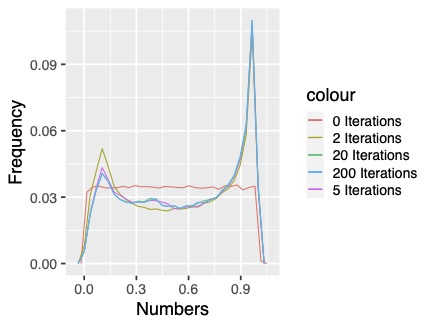}
    \caption{The densities for a simulation of 100,000 values, initially sampled randomly from a uniform distribution on (0,1), after various numbers of iterations where $\lambda$ is uniformly randomly sampled from $(3.87,4)$ each iteration.}
    \label{evol}
\end{center}
\end{figure}
\section{Conclusion}

In conclusion, we have illustrated that the stochastic logistic map, characterized by parameter values uniformly distributed in the chaotic regime, is irreducible. Consequently, the previously established invariant distribution stands as the unique invariant distribution. This result also implies that all initial distributions converge to the invariant distribution in the Ces\`{a}ro sense. Moreover, we have demonstrated the strong aperiodicity of the stochastic logistic map, indicating that initial distributions converge to the invariant distribution with respect to the total variation distance—a more robust result.  We conducted simulations of the stochastic logistic map to visualize the invariant distribution. Remarkably, the empirically derived invariant distribution exhibits qualitative distinctions when compared to the invariant distributions of the deterministic logistic map for values of $\lambda$ in the chaotic regime.  In particular, the simulated distribution does not appear to have a pole at $x=0$.  This contrasts with the work in  \cite{hall95}, which demonstrates   that the deterministic invariant density will have a pole at the boundary of its support.  \\
It should be noted that this work does not seem to depend on the  $\lambda$ values being uniformly distributed on (3.87,4) and indeed, it appears that the results will hold for any absolutely continuous distribution on (3.87,4).

In future work, we plan to provide a theoretical basis for these empirical observations.  In particular, we hope to better describe the asymptotically stable invariant distribution in the case that the $\lambda$-values are uniformly distributed (the case illustrated in Figure \ref{enddist}).  We also plan to provide similar theoretical results when the parameter has other distributions, such as an exponential or discrete distribution.

\bibliography{literature}

\end{document}